\newtheorem{thm}{Theorem}[section]
\newtheorem{prop}[thm]{Proposition}
\newtheorem{lem}[thm]{Lemma}
\newtheorem{cor}[thm]{Corollary}
\newtheorem{conj}[thm]{Conjecture}
\theoremstyle{definition}
\theoremstyle{remark}
\newtheorem{rem}[thm]{Remark}
\numberwithin{equation}{section}
\newcommand{\f}{\frac}
\newcommand{\lt}{\left}
\newcommand{\rt}{\right}
\newcommand{\vocab}{\textit}
\newcommand{\mean}{\overline}
\newcommand{\functo}{\rightarrow}
\newcommand{\approach}{~\rightarrow~}
\newcommand{\EE}{\mathbb{E}}
\newcommand{\GG}{\mathbb{G}}
\newcommand{\NN}{\mathbb{N}}
\newcommand{\PP}{\mathbb{P}}
\newcommand{\one}{\mathbb{1}}
\newcommand{\cC}{\mathcal{C}}
\newcommand{\cH}{\mathcal{H}}
\newcommand{\cM}{{\mathcal{M}}}
\newcommand{\cW}{{\mathcal{W}}}
\newcommand{\ff}{{\mathfrak{f}}}
\newcommand{\fg}{{\mathfrak{g}}}
\newcommand{\fm}{{\mathfrak{m}}}
\newcommand{\eps}{{\varepsilon}}
\renewcommand{\email}[2][]{%
  \ifx\emails\@empty\relax\else{\g@addto@macro\emails{,\space}}\fi%
  \@ifnotempty{#1}{\g@addto@macro\emails{\textrm{(#1)}\space}}%
  \g@addto@macro\emails{#2}%
}
\begin{document}

\title{Convergence of Maximum Bisection Ratio of Sparse Random Graphs}

\author{Brice Huang}

\address{Department of Mathematics, MIT, Cambridge, MA, USA}
\email{bmhuang@mit.edu}
\subjclass[2010]{Primary 05C80, 60C05; secondary 82-08.}
\keywords{Interpolation method; maximum bisection; 2-spin spin glass;
configuration model}

\begin{abstract}
  We consider sequences of large sparse random graphs
  whose degree distribution approaches a limit with finite mean.
  This model includes both the random regular graphs
  and the Erd\"os-Renyi graphs of constant average degree.
  We prove that the maximum bisection ratio of such a graph sequence
  converges almost surely to a deterministic limit.
  We extend this result to so-called 2-spin spin glasses
  in the paramagnetic to ferromagnetic regime.
  Our work generalizes the graph interpolation method
  to some non-additive graph parameters.
\end{abstract}

\maketitle

\section{Introduction}\label{sec-intro}

The \vocab{interpolation method} is used
in a remarkable paper by Guerra and Toninelli \cite{GuTo}
to prove the existence of an infinite volume limit of thermodynamic quantities.
In this method, a system of size $n$ is compared,
by a sequence of interpolating systems,
to a pair of independent systems of size $n_1$ and $n_2$, where $n_1+n_2=n$.
If, at each step of the interpolation, the parameter of interest increases,
then the parameter is subadditive in $n$,
and therefore converges when divided by $n$.

Bayati, Gamarnik, and Tetali \cite{BGT} adapted this technique
in a combinatorial setting as \vocab{graph interpolation}.
Using graph interpolation, \cite{BGT} proved that
in both the sparse Erd\"os-Renyi and $d$-regular random graph models,
several graph parameters, including independence number and maxcut size,
converge when divided by $n$.
Gamarnik \cite{Gam} showed an analogous result
for log-partition functions in the context of right-convergence of graphs,
and found that the subadditivity required for graph interpolation
follows from a concavity property of the graph parameter.

In a recent synthesis, Salez \cite{Sal} further generalized these results
by identifying the properties of these parameters that permit interpolation;
Salez proved that an interpolation argument succeeds whenever
the graph parameter satisfies additivity, Lipschitz, and concavity conditions.
Moreover, \cite{Sal} generalized
the $d$-regular random graph model of \cite{BGT}
to graphs with arbitrary degree distribution generated by a configuration model.

The interpolation arguments in the literature all depend
on an additivity property of the graph parameter --
that if $G$ is the vertex-disjoint union of graphs $G_1, G_2$,
the graph parameter $f$ satisfies $f(G)=f(G_1)+f(G_2)$.
While many graph parameters of interest, such as independence number,
maxcut, $K$-SAT, and log-partition functions all have this property,
other graph parameters, such as maximum bisection, do not.

In this paper, we show that the maximum bisection parameter
in the arbitrary degree sequence model converges when divided by $n$.
The random regular graph case of our result resolves
an open problem on spin glasses \cite[Problem 2.3]{Aim}.
The analogue of this problem for Erd\"os-Renyi random graphs
was resolved in an unpublished result of Gamarnik and Tetali;
this result is also implied by our result.

We then consider a type of $p$-hybrid bisections for $p\in [0,1]$,
interpolating between the maximum and minimum bisections.
These are the maximum bisections of the ``2-spin spin glass" model
studied by Franz and Leone in \cite{FrLe},
where the parameter $p$ determines the ferromagnetism of the system.
We show that for $p\ge \f12$, the $p$-hybrid bisection
in the arbitrary degree sequence model also converges when divided by $n$.
In other words, the maximum bisection of the 2-spin spin glass model
has a scaling limit in the paramagnetic to ferromagnetic regime.

The key idea allowing us to extend the results in \cite{BGT} and \cite{Sal}
to maximum bisection and maximum $p$-hybrid bisection,
which are not additive, is to consider $(A,B)$-bisections,
bisections that also bisect two given sets $A,B$ that partition $V(G)$.
This added constraint allows us to decompose a system into two parts,
an operation that previously depended on additivity.
By showing a form of subadditivity on maximum $(A,B)$-bisections,
we can show subadditivity on maximum bisection
and establish the existence of a scaling limit.

\subsection*{Acknowledgements}
The author gratefully acknowledges Mustazee Rahman
for many insightful conversations,
and for bringing much of the relevant literature to the author's attention.
The author also thanks the MIT Math Department's
Undergraduate Research Opportunities Program,
in which this work was completed.

\section{Preliminaries}\label{sec-prelim}

\subsection{Random Graphs with Given Degree Sequence}

Throughout this paper, we will work with finite, undirected graphs,
where loops and multiple edges are allowed.

We will work with the following random graph model.
Consider nodes $[n] = \{1,\dots,n\}$, and a degree function $d:[n] \functo \NN$.
Create a multiset $\cH_d$ of nodes, where each $i\in [n]$ appears $d(i)$ times.
Each (possibly partial) matching $\fm$ of $\cH_d$ induces a graph $G[\fm]$,
which contains an edge $(i,j)$ for every pair $\{i,j\} \in \fm$.
Note that if $\fm$ is not a complete matching,
some vertex $i$ in $G[\fm]$ will have degree less than $d(i)$.

\begin{figure}[h]
  \centering
  \begin{picture} (100, 83.33)
    \put(00.0,41.67){\circle*{2}}
    \put(25.0,00.00){\circle*{2}}
    \put(25.0,83.33){\circle*{2}}
    \put(75.0,00.00){\circle*{2}}
    \put(75.0,83.33){\circle*{2}}
    \put(100.0,41.67){\circle*{2}}

    \put(-7.0,38.67){$1$}
    \put(18.0,-3.00){$3$}
    \put(18.0,80.3){$1$}
    \put(77.0,-3.00){$4$}
    \put(77.0,80.33){$2$}
    \put(102.0,38.67){$2$}

    \put(-3.0,41.67){ \line(3,-5){25} }
    \put(97.0,41.67){ \line(-3,-5){25} }
    \put(22.0,83.33){ \line(1,0){50}}
  \end{picture}
  \caption{A matching of $\cH_d$, where $d(1)=d(2)=2$ and $d(3)=d(4)=1$.}
\end{figure}
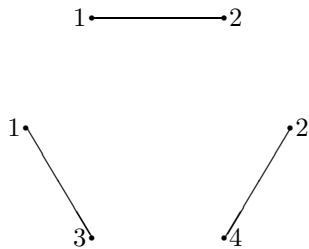

We let $\GG_{d}$ denote the distribution of $G[\fm]$,
where $\fm$ is a uniformly random complete matching on $\cH_d$.
Note that when $d$ is a constant function with value $r$,
$\GG_d$ is the random $r$-regular graph model,
and when $d$ is sampled from the degree distribution
of an Erd\"os-Renyi random graph,
the doubly-random $\GG_d$ is the corresponding Erd\"os-Renyi random graph model.

We say a sequence $\{d_n: [n]\functo \NN \}_{n\ge 1}$
\vocab{converges in distribution} to a probability measure
$\mu: \NN\functo [0,1]$ with finite mean $\mean{\mu}$ if for all $k\in \NN$,
\begin{equation}
  \f{1}{n} \sum_{i\in [n]} \one_{d_n(i) = k} \approach \mu(k)
\end{equation}
and
\begin{equation}
  \f{1}{n} \sum_{i\in [n]} d_n(i) \approach \mean{\mu}
\end{equation}
as $n\approach \infty$.

The results in this paper are concerned with
families of random graphs $\{\GG_{d_n}\}_{n\ge 1}$,
where each $\GG_{d_n}$ is sampled independently,
and where the degree functions $d_n$
converge in distribution to a measure $\mu$ with finite mean.

\subsection{Graph Parameters}

A \vocab{graph parameter} is a real-valued,
isomorphism-invariant function on graphs.
Given a graph parameter $\ff$ and a graph $G$,
define $\Delta^{G, \ff}$ as the matrix given by
\begin{equation}
  \Delta^{G, \ff}_{ij} = \ff(G+ij)-\ff(G)
\end{equation}
for $i,j\in V(G)$.

We say a graph parameter $\ff$ is \vocab{additive} if
\begin{equation}
  \ff(G)=\ff(G_1)+\ff(G_2)
\end{equation}
when $G$ is the disjoint union of $G_1$ and $G_2$.
We say $\ff$ is \vocab{1-Lipschitz} if for all $G$, and all $i,j\in V(G)$,
\begin{equation}
  |\Delta^{G,\ff}_{ij}| \le 1.
\end{equation}
Finally we say $\ff$ is \vocab{concave} if
\begin{equation}
  x\cdot \one = 0 \Rightarrow x^T \Delta^{G,\ff} x \le 0,
\end{equation}
where $\one$ is the all-1 vector on $V(G)$.

The most general result on graph parameters is due to Salez.
\begin{thm}\label{thm-sal}\cite{Sal}
  Let $\ff$ be an additive, $\kappa$-Lipschitz, concave graph parameter,
  and let $\{d_n\}_{n\ge 1}$ converge in distribution
  to a measure $\mu$ with finite mean.
  Then, the sequence of independent samples
  \begin{equation}
    \f{1}{n} \ff\lt(\GG_{d_n}\rt)
  \end{equation}
  converges almost surely to a limit $\Psi(\mu)$ as $n\approach \infty$.
  Moreover, the scaling limit
  \begin{equation}\label{eq-salez-scaling-limit}
    \lim_{n\approach \infty} \f{1}{n} \EE\lt[\ff\lt(\GG_{d_n}\rt)\rt]
  \end{equation}
  exists and equals $\Psi(\mu)$.
\end{thm}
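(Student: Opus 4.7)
The plan is to split the proof into a concentration step (which reduces the almost sure convergence statement to convergence of expectations) and a subadditivity step (which gives convergence of expectations via an interpolation argument).

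\textbf{Concentration.} First I would build $\GG_{d_n}$ by exposing the edges of a uniform matching $\fm$ on $\cH_{d_n}$ one pair at a time, forming a Doob martingale for $\ff(\GG_{d_n})$. A single re-pairing of half-edges alters at most two edges of $G[\fm]$ and hence, by the Lipschitz hypothesis, changes $\ff$ by at most $2\kappa$. Azuma's inequality then gives
\begin{equation*}
  \PP\lt( \lt| \ff(\GG_{d_n}) - \EE\ff(\GG_{d_n}) \rt| \ge \eps n \rt) \le 2\exp(-c\eps^2 n/\kappa^2),
\end{equation*}
which is summable in $n$, so Borel--Cantelli reduces the theorem to showing that $\f{1}{n} \EE[\ff(\GG_{d_n})]$ converges to a deterministic limit $\Psi(\mu)$ depending only on $\mu$.

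\textbf{Subadditivity via interpolation.} For a degree sequence $d_n$ on $[n]$ whose restrictions to $[n_1]$ and $\{n_1{+}1, \dots, n\}$ give valid sequences $d_{n_1}, d_{n_2}$ with $n_1+n_2=n$, I would prove the approximate subadditivity
\begin{equation*}
  \EE[\ff(\GG_{d_n})] \le \EE[\ff(\GG_{d_{n_1}})] + \EE[\ff(\GG_{d_{n_2}})] + o(n),
\end{equation*}
whose right side equals $\EE[\ff(\GG_{d_{n_1}} \sqcup \GG_{d_{n_2}})]$ by additivity. To do so, I would interpolate between two distributions over matchings of $\cH_{d_n}$: the uniform one (yielding $\GG_{d_n}$) at time $t=0$, and the one constrained to pair half-edges only within $[n_1]$ or within $\{n_1{+}1,\dots,n\}$ (yielding the disjoint union) at time $t=1$. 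At each infinitesimal time, a random ``cross'' pair is rewired into two ``same-side'' pairs; the resulting differential of $\EE[\ff]$ is computed as $\EE[x^T \Delta^{G,\ff} x]$ for a zero-sum vector $x$ supported on the four half-edges involved in the rewiring, so concavity forces the differential to be nonpositive. Integrating over $t$, with $O(\kappa)$ corrections per step from the edge-count change, yields the subadditivity.

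\textbf{Wrap-up and universality.} Iterating subadditivity and invoking Fekete's lemma along subsequences built by concatenating copies of a fixed $d_n$ yields a limit along any one sequence $d_n \to \mu$. Universality of the limit across such sequences follows from a Lipschitz swap: two sequences $d_n, d_n'$ both converging to $\mu$ agree on all but $o(n)$ coordinates after a permutation, and each discrepant vertex perturbs $\ff$ by at most $O(\kappa\,\max_i d_n(i))$, controlled in expectation by the finite-mean assumption on $\mu$. The common limit is $\Psi(\mu)$, and the concentration step promotes this to almost sure convergence.

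\textbf{Main obstacle.} The hardest step is executing the interpolation cleanly for a general, non-regular degree sequence: one must identify the correct half-edge rewiring dynamics and express the infinitesimal change of $\EE[\ff]$ as a quadratic form $x^T \Delta^{G,\ff} x$ with $x \cdot \one = 0$, so that the concavity hypothesis applies at every step. In the regular case of \cite{BGT} this is a clean edge-swap, but with mixed degrees the combinatorics of matching up half-edges between the two halves so that the resulting $x$ has zero sum is delicate, and a secondary nuisance is keeping all boundary and truncation errors at the $o(n)$ scale using only a finite first moment on $\mu$.
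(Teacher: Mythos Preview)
This theorem is quoted from \cite{Sal} and the present paper gives no proof of it; there is nothing in the paper to compare your argument against directly. That said, the paper does prove its own main result (Theorem~\ref{thm-hybrid}) by exactly the three-stage architecture you describe---Azuma--Hoeffding concentration, an interpolation-based (super)additivity estimate, and a Wasserstein-type continuity bound to pass from i.i.d.\ degrees to an arbitrary sequence $d_n\to\mu$---so your plan is the right one and matches both \cite{Sal} and this paper's methodology.

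Two points are worth correcting. First, the inequality you write is in the wrong direction: your own interpolation paragraph says the differential of $\EE[\ff]$ is nonpositive as cross-pairs are rewired into same-side pairs, which yields
\[
  \EE[\ff(\GG_{d_{n_1}})]+\EE[\ff(\GG_{d_{n_2}})]\le \EE[\ff(\GG_{d_n})]+o(n),
\]
i.e.\ approximate \emph{super}additivity (compare Proposition~\ref{prop-graph-interpolation} and the proof of Theorem~\ref{thm-hybrid}). Fekete then needs an a priori upper bound on $\ff/n$, which the Lipschitz hypothesis supplies. Second, your universality step via ``agree on all but $o(n)$ coordinates, each costing $O(\kappa\max_i d_n(i))$'' does not go through with only a first-moment assumption on $\mu$: the sequences $d_n$ are deterministic, $\max_i d_n(i)$ need not be $o(n)$, and the number of discrepant coordinates times a max-degree bound can easily fail to be $o(n)$. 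The correct tool, used both in \cite{Sal} and in this paper (Proposition~\ref{prop-mu-lipschitz}), is the Wasserstein bound
\[
  \Bigl|\tfrac1n\EE[\ff(\GG_d)]-\tfrac1n\EE[\ff(\GG_{d'})]\Bigr|\le 2\kappa\,\cW\!\left(\tfrac1n\sum_i\delta_{d(i)},\,\tfrac1n\sum_i\delta_{d'(i)}\right),
\]
which is exactly what converges to zero under the hypothesis $d_n\to\mu$ with finite mean.
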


Bayati, Gamarnik, and Tetali \cite{BGT} showed, before Salez,
that the scaling limit (\ref{eq-salez-scaling-limit}) exists for the
max-cut, independence number, K-SAT, and not-all-equal K-SAT parameters,
in the random $r$-regular graph and Erd\"os-Renyi random graph models.
As these parameters all satisfy the hypothesis of Theorem~\ref{thm-sal},
and the random regular graph and Erd\"os-Renyi random graph models
are special cases of the arbitrary degree sequence model,
this result is a consequence of Theorem~\ref{thm-sal}.

\section{Results}\label{sec-results}

\subsection{Graph Bisections}

Define the \vocab{maximum bisection} of a graph $G$ by
\begin{equation}
  MB(G) =
  \max \lt\{
    e(V_1,V_2) |
    \text{$V_1,V_2$ partition $V(G)$, $\Big||V_1|-|V_2|\Big| \le 1$}
  \rt\},
\end{equation}
where $e(V_1,V_2)$ is the number of edges between $V_1$ and $V_2$ in $G$.
Observe that the maximum bisection is not additive,
and therefore Theorem~\ref{thm-sal} does not apply.

The first result of this paper is:
\begin{thm}\label{thm-maxbi}
  Let $\{d_n\}_{n\ge 1}$ converge in distribution
  to a measure $\mu$ with finite mean.
  Then,
  \begin{equation}\label{eq-maxcut-almost-sure-conv}
    \f{1}{n} MB\lt(\GG_{d_n}\rt),
  \end{equation}
  where each $\GG_{d_n}$ is sampled independently,
  converges almost surely as $n\approach \infty$.
  Moreover, the scaling limit
  \begin{equation}\label{eq-maxcut-scaling-limit}
    \lim_{n\approach \infty} \f{1}{n} \EE\lt[MB\lt(\GG_{d_n}\rt)\rt]
  \end{equation}
  exists.
\end{thm}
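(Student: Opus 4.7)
The plan is to introduce an auxiliary parameter, the maximum $(A,B)$-bisection. For a graph $G$ with a vertex partition $V(G) = A \sqcup B$, define
\[ MB_{A,B}(G) = \max\{ e(V_1, V_2) : (V_1,V_2) \text{ partitions } V(G) \text{ and bisects both } A \text{ and } B\} \]
(with the standard $\pm 1$ adjustment when a cardinality is odd). Although $MB_{A,B}$ is not additive under arbitrary disjoint unions, it satisfies an \emph{aligned additivity} property that will substitute for the additivity hypothesis in Theorem~\ref{thm-sal}: if $G = G_1 \sqcup G_2$ with $V(G_1) = A$ and $V(G_2) = B$, then any $(A,B)$-bisection of $G$ is forced to bisect each component separately, so $MB_{A,B}(G) = MB(G_1) + MB(G_2)$.

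The core target is the approximate subadditivity
\[ \EE[MB(\GG_{d_n})] \le \EE[MB(\GG_{d_{n_1}})] + \EE[MB(\GG_{d_{n_2}})] + o(n) \]
for each split $n = n_1 + n_2$. I fix a partition $[n] = A \sqcup B$ with $|A| = n_1$, $|B| = n_2$, and prove the inequality in three steps. First, $\EE[MB(\GG_{d_n})] \le \EE[MB_{A,B}(\GG_{d_n})] + o(n)$ after averaging over a uniform random such partition: for any $G$, the $MB$-optimal bisection $(V_1^\ast, V_2^\ast)$ satisfies $|V_1^\ast \cap A| = n_1/2 + O(\sqrt{n})$ by hypergeometric concentration, and swapping $O(\sqrt{n})$ low-degree vertices yields an exact $(A,B)$-bisection whose cut loss is $o(n)$ under the finite-mean degree hypothesis. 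Second, a graph interpolation argument adapted from \cite{BGT} and \cite{Sal} yields
\[ \EE[MB_{A,B}(\GG_{d_n})] \le \EE[MB_{A,B}(\GG_{d_n|_A} \sqcup \GG_{d_n|_B})] + o(n), \]
where $d_n|_A, d_n|_B$ denote the restrictions of the degree sequence. Third, aligned additivity converts the right-hand side into $\EE[MB(\GG_{d_n|_A})] + \EE[MB(\GG_{d_n|_B})]$, and each term is within $o(n)$ of $\EE[MB(\GG_{d_{n_i}})]$ since the restricted sequences still converge in distribution to $\mu$.

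The main technical obstacle is the interpolation in the second step. The parameter $MB_{A,B}$ is immediately $1$-Lipschitz (adding an edge shifts any fixed cut value by $0$ or $1$), but one must verify a concavity-type property analogous to the one driving the interpolation in \cite{Sal}. The complication is that the $(A,B)$-bisection constraints restrict the set of admissible cuts, and the probabilistic coupling of near-optimal cuts used by \cite{Sal} must be adapted so the coupled cuts continue to satisfy these linear constraints. I expect this to be tractable by working with a Gibbs-type measure on admissible $(A,B)$-bisections and exploiting the linearity of the constraints, but these details are the delicate part of the argument.

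Once approximate subadditivity is established, Fekete's lemma applied to $\EE[MB(\GG_{d_n})]/n$ gives convergence of the scaling limit (\ref{eq-maxcut-scaling-limit}), and almost sure convergence (\ref{eq-maxcut-almost-sure-conv}) follows from Azuma--Hoeffding applied to the edge-exposure martingale of the configuration model construction, whose bounded-difference constant is controlled by the $1$-Lipschitz property of $MB$.
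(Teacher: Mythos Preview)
Your strategy matches the paper's: introduce the constrained parameter $MB_{A,B}$, exploit its aligned additivity when there are no cross-edges, run a Salez-style interpolation on $MB_{A,B}$, and finish with Fekete plus Azuma--Hoeffding on the edge-exposure martingale. The key gap is that your second step has the interpolation inequality pointing the wrong way. The local super-additivity one actually obtains for $MB_{A,B}$ (the analogue of the concavity condition in \cite{Sal}) is
\[
\tfrac12\big(F_{MB_{A,B}}(\alpha+1,\beta,\gamma)+F_{MB_{A,B}}(\alpha,\beta+1,\gamma)\big)\le F_{MB_{A,B}}(\alpha,\beta,\gamma+1)+\tfrac{2}{\delta},
\]
i.e.\ replacing a within-edge by a cross-edge \emph{increases} the expected constrained bisection. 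Summed along the interpolation path this gives
\[
\EE\big[MB_{A,B}(\GG_{d_n|_A}\sqcup\GG_{d_n|_B})\big]\le \EE\big[MB_{A,B}(\GG_{d_n})\big]+o(n),
\]
the reverse of what you wrote. Combined with aligned additivity on the left and the trivial bound $MB_{A,B}\le MB$ on the right, this yields approximate \emph{super}additivity $\EE[MB(\GG_{d_{n_1}})]+\EE[MB(\GG_{d_{n_2}})]\le \EE[MB(\GG_{d_n})]+o(n)$, to which Fekete applies equally well. In particular your first step---the hypergeometric argument showing $MB\le MB_{A,B}+o(n)$ for a random partition---is not needed; the paper never uses it.

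On the ``delicate'' concavity verification: the paper does this by a direct combinatorial computation rather than via a Gibbs measure. One fixes a matching $\fm$ and partitions the unmatched half-edges into equivalence classes according to which side of each \emph{optimal} $(A,B)$-bisection they lie on (classes come in opposing pairs $O_i,P_i$). The expected increment from adding a random $A$-edge, $B$-edge, or cross-edge is then an explicit quadratic form in the class sizes $o_i^A,p_i^A,o_i^B,p_i^B$, and the required inequality reduces to the nonnegativity of $\sum_i\big[(o_i^A/a-o_i^B/b)^2+(p_i^A/a-p_i^B/b)^2\big]$. This is where the $(A,B)$ constraint is genuinely used: it forces the optimal bisections to bisect $A$ and $B$ separately, which is what makes the cross-edge increment dominate the average of the within-edge increments.
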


Whether the same result holds
for the minimum bisection graph parameter is an open problem.
In fact, the random regular graph case of this problem
is implied by the following stronger conjecture.
\begin{conj}\cite{ZdBo}\label{conj-lenka}
  Let $MC$ and $mB$ denote, respectively,
  the max-cut and min-bisection parameters,
  and let $\GG(n,r)$ be a random $r$-regular graph on $n$ vertices.
  Then,
  \begin{equation}\label{eq-lenka}
    MC(\GG(n,r)) + mB(\GG(n,r)) = |E| + o(n),
  \end{equation}
  where $|E|=\f12 nr$ is the number of edges in $\GG(n,r)$.
\end{conj}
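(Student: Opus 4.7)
The conjecture reduces to two independent steps: (1) almost-sure convergence of $\psi_{mB}(r) := \lim \f{1}{n} mB(\GG(n,r))$, and (2) the identity $\psi_{MC}(r) + \psi_{mB}(r) = r/2$. Max-cut convergence to $\psi_{MC}(r)$ is already granted by Theorem~\ref{thm-sal}, so only these two steps remain.

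Step 1 is itself open, and is not handled by Theorem~\ref{thm-maxbi}: that theorem concerns max-bisection, which is in the paramagnetic-to-ferromagnetic regime, whereas min-bisection is antiferromagnetic, a regime where the Guerra-Toninelli interpolation inequality reverses sign. I would attempt it by re-using the $(A,B)$-bisection device described above but seeking \emph{superadditivity} on disjoint unions rather than subadditivity, by comparison to an auxiliary Gaussian $2$-spin model in the spirit of Franz-Leone \cite{FrLe} for which the interpolation estimate survives in the antiferromagnetic direction.

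For Step 2, using $e_-(\sigma)=(|E|-H(\sigma))/2$ with $H(\sigma) = \sum_{(i,j) \in E} \sigma_i \sigma_j$, the identity is equivalent to
\begin{equation}
\min_\sigma H(\sigma) + \max_{\sigma:\sum_i \sigma_i = 0} H(\sigma) = o(n).
\end{equation}
The motivating intuition is the Aldous-Lyons local weak limit: $\GG(n,r)$ converges locally to the infinite $r$-regular tree $T_r$, which is bipartite. On $T_r$ the antiferromagnetic minimum ($H=-|E|$, realized by the alternating two-coloring) is exactly the negative of the zero-magnetization ferromagnetic maximum ($H=+|E|$, obtained from the previous by flipping one color class). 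I would lift this to $\GG(n,r)$ by expressing both ground-state energies as Parisi-type variational formulas over tree-indexed measures on $T_r$ and exploiting the even/odd spin-flip symmetry.

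The main obstacle is bridging the local and global pictures: although $T_r$ is bipartite, the global frustration on $\GG(n,r)$ is $\Theta(n)$, so both ground-state energies are shifted from their tree ideals by macroscopic amounts which must cancel to $o(n)$, and this cancellation cannot be extracted from the Benjamini-Schramm limit alone. Making it rigorous appears to require first a Parisi formula for $mB$ on random regular graphs (itself open) and then a delicate duality between the antiferromagnetic and constrained ferromagnetic variational problems; this is essentially the content of the Zdeborov\'a-Boettcher prediction and explains why the conjecture has remained unresolved.
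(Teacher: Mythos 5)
You have not proved anything here, but that is the correct outcome: the statement you were given is Conjecture~\ref{conj-lenka}, which the paper does not prove either. It is quoted from Zdeborov\'a--Boettcher \cite{ZdBo} purely as context, to explain that convergence of the min-bisection ratio (which the conjecture would imply, given the known convergence of max-cut) remains open and is \emph{not} covered by Theorem~\ref{thm-maxbi} or Theorem~\ref{thm-hybrid}. So there is no ``paper proof'' to compare against, and your proposal, as you yourself acknowledge, is a research program rather than a proof: Step 1 (convergence of $mB(\GG(n,r))/n$), the Parisi-type formula for $mB$ on random regular graphs, and the duality cancellation in Step 2 are all open, so every load-bearing ingredient is missing. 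To your credit, you correctly diagnose why the paper's machinery stops short: the interpolation argument of Proposition~\ref{prop-subadd} hinges on Lemma~\ref{lem-hb-subadditive}, whose key inequality is linear in $p$ and is verified only at $p=1$ and $p=\f12$; for $p<\f12$ (the antiferromagnetic regime containing min-bisection) that sign argument genuinely fails, and it does not simply reverse --- one cannot expect to recover superadditivity by flipping the inequality, because the quadratic forms appearing in the proof do not change sign uniformly. So ``re-using the $(A,B)$-bisection device but seeking superadditivity'' is not a minor modification; it is exactly the obstruction that makes the problem open.

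One smaller caution about your Step 2 heuristic: the bipartiteness of the local limit $T_r$ cannot by itself suggest the identity, since for fixed $r$ both $|E|-MC$ and $mB$ are $\Theta(n)$ away from their tree ideals, as you note; the conjecture is precisely the assertion that these two macroscopic frustrations coincide to $o(n)$, and any argument must operate at that global level (e.g.\ via a common variational description of both ground states), not at the level of the Benjamini--Schramm limit. In short: correctly identified as open, no gap in your self-assessment, but nothing here constitutes progress beyond what the paper already states.
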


\subsection{Hybrid Bisections}

We define the \vocab{$p$-hybrid bisection} $HB_p$ of a graph $G$ as follows.
Let $\Omega$ be a labeling of the edges of $G$,
with each edge independently labeled $+1$ with probability $p$,
and $-1$ with probability $1-p$,
and let $G(\Omega)$ denote $G$ with the labeling $\Omega$.
In the statistical physics literature
(cf. \cite{FrLe}, \cite{FLT}, \cite{PaTa}),
the graph $G(\Omega)$ is a \vocab{2-spin spin glass},
with the parameter $p$ determining the system's magnetism:
the system is ferromagnetic at $p=1$, paramagnetic at $p=\f12$,
and antiferromagnetic at $p=0$.

We define
\begin{equation}
  HB_p(G) = \EE\lt[MB(G(\Omega))\rt],
\end{equation}
where the expectation is over the randomness of $\Omega$.

Note that when $p=1$, a $p$-hybrid bisection is a max bisection,
and when $p=0$, a $p$-hybrid bisection is a min bisection.
Our main result is:
\begin{thm}\label{thm-hybrid}
  Fix $p\ge \f12$,
  and let $\{d_n\}_{n\ge 1}$ converge in distribution
  to a measure $\mu$ with finite mean.
  Then,
  \begin{equation}\label{eq-hybrid-almost-sure-conv}
    \f{1}{n} HB_p\lt(\GG_{d_n}\rt),
  \end{equation}
  where each $\GG_{d_n}$ is sampled independently,
  converges almost surely as $n\approach \infty$.
  Moreover, the scaling limit
  \begin{equation}\label{eq-hybrid-scaling-limit}
    \lim_{n\approach \infty} \f{1}{n} \EE\lt[HB_p\lt(\GG_{d_n}\rt)\rt]
  \end{equation}
  exists.
\end{thm}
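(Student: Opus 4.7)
The plan is to extend the interpolation argument of Salez (Theorem~\ref{thm-sal}) to the non-additive parameter $HB_p$. A bisection must balance vertex counts across an entire graph, so $HB_p(G_1 \sqcup G_2) \ne HB_p(G_1) + HB_p(G_2)$ in general. As in the proof of Theorem~\ref{thm-maxbi}, I would circumvent this by introducing a constrained auxiliary parameter: for a graph $G$ equipped with a vertex partition $V(G) = A \sqcup B$, let $MB^{A,B}(G)$ be the maximum cut over those bisections $(V_1, V_2)$ that simultaneously bisect $A$ and $B$, and set
\begin{equation}
  HB_p^{A,B}(G) = \EE\lt[MB^{A,B}(G(\Omega))\rt].
\end{equation}
This constrained parameter is genuinely additive across disjoint unions of $(A,B)$-decorated graphs, placing it within the scope of a Salez-style interpolation on the enlarged category.

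Before running the interpolation I would verify the $1$-Lipschitz and concavity conditions for $HB_p^{A,B}$. The Lipschitz bound is immediate: for any signing $\Omega$, adding a single edge shifts any fixed cut value by $\pm 1$, so $|MB^{A,B}(G(\Omega)+ij) - MB^{A,B}(G(\Omega))| \le 1$, and $\Omega$-expectation preserves this. Concavity requires $x^T \Delta^{G, HB_p^{A,B}} x \le 0$ whenever $x \cdot \one = 0$. I would expand each $\Delta_{ij}$ as a joint expectation over $\Omega$ and over the optimizing configuration $\sigma \in \{\pm 1\}^{V(G)}$, rewrite the quadratic form as a Gibbs-style two-point average of $\sigma_i \sigma_j$ weighted by the signed edge couplings, and bound it by a Cauchy-Schwarz-type inequality. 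The restriction $p \ge \f12$ enters here: the mean edge coupling equals $2p-1 \ge 0$, which is exactly what signs the correlation inequality in the required direction, mirroring the ferromagnetic-regime computations for the 2-spin spin glass in \cite{GuTo,FrLe}.

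Given these conditions, the interpolation is standard. Partition $[n]$ into $A \sqcup B$ with $|A| = n_1$, $|B| = n - n_1$, and consider a one-parameter family of matchings interpolating between a uniformly random complete matching on $\cH_{d_n}$ and a disjoint union of independent matchings on the $A$- and $B$-halves. The Lipschitz and concavity bounds, integrated along this path exactly as in \cite{Sal}, yield a subadditive inequality for $HB_p^{A,B}$. To transfer subadditivity back to $HB_p$, I would observe that any bisection of $G$ can be converted into an $(A,B)$-bisection by swapping $O(\sqrt{n})$ vertices between sides, which alters the cut value by $o(n)$ under the finite-mean hypothesis; hence $|HB_p(G) - HB_p^{A,B}(G)| = o(n)$ in expectation. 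Fekete's lemma then produces the scaling limit (\ref{eq-hybrid-scaling-limit}), and a bounded-differences concentration estimate on the underlying matching combined with Borel-Cantelli upgrades convergence in expectation to the almost sure statement (\ref{eq-hybrid-almost-sure-conv}).

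The main obstacle is the concavity verification. Unlike max-cut, where the relevant quadratic form involves a single deterministic optimizer and signs transparently, each second difference $\Delta_{ij}^{G, HB_p^{A,B}}$ is a double average over random signings and over bisection optimizers, and the sign of $x^T \Delta x$ is governed by a correlation inequality that reverses below $p = \f12$. Executing this calculation rigorously while respecting the $(A,B)$ constraint, which couples the optimizers across the partition, is the technical heart of the argument and the reason the result does not extend to the antiferromagnetic regime $p < \f12$; in particular, this is where the open case of minimum bisection (at $p=0$) eludes the method.
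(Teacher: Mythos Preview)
Your high-level plan matches the paper's: introduce the constrained pseudo-parameter $HB_p^{A,B}$, run a Salez-type interpolation on it, and deduce near-superadditivity of $\EE[HB_p(\GG_d)]$. But the transfer step contains a genuine gap. You assert that any bisection can be converted into an $(A,B)$-bisection by swapping $O(\sqrt{n})$ vertices, so that $|HB_p(G)-HB_p^{A,B}(G)|=o(n)$. For a \emph{fixed} partition $(A,B)$ this is false: if $|A|=|B|=n/2$ and the optimal bisection happens to be $(A,B)$ itself, turning it into an $(A,B)$-bisection requires moving $n/4$ vertices. The paper sidesteps this entirely by noting that only the trivial one-sided bound $HB_p^{A,B}(G)\le HB_p(G)$ is needed: every $(A,B)$-bisection is already a bisection. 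Combined with the interpolation output
\[
  \EE\bigl[HB_p(\GG_{d\uparrow A})\bigr]+\EE\bigl[HB_p(\GG_{d\uparrow B})\bigr]
  \;\le\;
  \EE\bigl[HB_p^{A,B}(\GG_d)\bigr]+\psi\bigl(|E(\GG_d)|\bigr),
\]
this immediately gives the near-superadditivity of $\EE[HB_p(\GG_d)]$, after which Fekete, the Wasserstein comparison (Proposition~\ref{prop-mu-lipschitz}), and Azuma--Hoeffding finish as you describe.

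On the core inequality, the paper does not verify the abstract concavity condition $x^T\Delta x\le 0$ for all $x\perp\one$. It instead checks the specific local super-additivity hypothesis of Proposition~\ref{prop-graph-interpolation} by an explicit calculation (Lemma~\ref{lem-hb-subadditive}): for a fixed matching $\fm$ and signing $\Omega$, the unmatched half-edges are partitioned into equivalence classes according to which side of each optimal $(A,B)$-bisection they lie on, and the expected increments from adding a random $A$-edge, $B$-edge, or cross-edge are compared directly. The restriction $p\ge\tfrac12$ enters not through the sign of the mean coupling $2p-1$, but because the resulting inequality is \emph{linear} in $p$ and reduces at each endpoint $p=1$ and $p=\tfrac12$ to a different sum of squares; the $p=\tfrac12$ identity is what fails to extend below. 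Your Gibbs-average/Cauchy--Schwarz sketch is plausible in spirit but does not expose this structure, and it is this computation---not a generic correlation inequality---that carries the weight of the argument.
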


As Theorem~\ref{thm-maxbi} is a special case of Theorem~\ref{thm-hybrid},
the rest of this paper will be devoted to proving Theorem~\ref{thm-hybrid}.

\begin{rem}
  Let $\alpha \in (0,1)$.
  We can define an \vocab{$\alpha$-cut} of $G$ as a
  cut that partitions $V(G)$ into the ratio $\alpha:(1-\alpha)$.
  In particular, a bisection is a $\f12$-cut.
  Theorems~\ref{thm-maxbi} and \ref{thm-hybrid} remain true
  when ``bisection" is replaced by ``$\alpha$-cut,"
  and their proofs are analogous.
\end{rem}

\section{Graph Pseudo-Parameters}

\subsection{Constrained Max-Bisections}

Define a \vocab{graph pseudo-parameter} as a real-valued,
not necessarily isomorphism-invariant function on graphs.

The main idea that allows us to consider
the non-additive parameter $HB_p$ is as follows.
Let $A,B$ be a partition of the vertices of a graph $G$.
Say an \vocab{$(A,B)$-bisection} is a bisection of the vertices of $G$
that also bisects the sets $A,B$.
Let $MB^{A,B}$ denote the maximum $(A,B)$-bisection of $G$.
Analogously, define
\begin{equation}
  HB_p^{A,B}(G) = \EE\lt[MB^{A,B}(G(\Omega))\rt],
\end{equation}
where $\Omega$ is defined as before.
Note that both $MB^{A,B}$ and $HB_p^{A,B}$ are graph pseudo-parameters,
and that they are additive in the following sense.
When there are no edges from $A$ and $B$,
\begin{equation}
  MB^{A,B}(G) = MB(G[A])+MB(G[B])
\end{equation}
and
\begin{equation}
  HB_p^{A,B}(G) = HB_p(G[A])+HB_p(G[B])
\end{equation}
where $G[A]$ and $G[B]$ are the induced subgraphs of $G$ on $A$ and $B$.

We will prove the following result, which, in light of the bound
\begin{equation}
  HB_p^{A,B}(G) \le HB_p(G),
\end{equation}
will imply that $HB_p$ is subadditive.
\begin{prop}\label{prop-subadd}
  Let $A,B$ be a partition of $[n]$.
  Let $d: [n]\functo \NN$ be a function,
  and let $d\uparrow A, d\uparrow B$ denote its restrictions to $A$ and $B$.
  Then,
  \begin{equation}
    \EE\lt[HB_p(\GG_{d\uparrow A})\rt] +
    \EE\lt[HB_p(\GG_{d\uparrow B})\rt]
    \le
    \EE\lt[HB_p^{A,B}(\GG_d)\rt] +
    \psi\lt(|E(\GG_d)|\rt),
  \end{equation}
  where $\psi(x) = 7\sqrt{x\log (1+x)}$.
\end{prop}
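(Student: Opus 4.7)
The plan is to adapt the graph interpolation method of \cite{Sal} to the (non-additive) pseudo-parameter $HB_p^{A,B}$, exploiting the observation already noted in the excerpt that $HB_p^{A,B}$ is additive on graphs with no $A$-$B$ edges. First I will rewrite the left-hand side: since $\GG_{d\uparrow A}\sqcup\GG_{d\uparrow B}$ is a graph on $[n]$ with no $A$-$B$ edges, additivity of $HB_p^{A,B}$ in this regime gives
\begin{equation*}
\EE\bigl[HB_p(\GG_{d\uparrow A})\bigr]+\EE\bigl[HB_p(\GG_{d\uparrow B})\bigr]
=\EE\bigl[HB_p^{A,B}(\GG_{d\uparrow A}\sqcup\GG_{d\uparrow B})\bigr],
\end{equation*}
modulo a parity adjustment if $d_A=\sum_{i\in A}d(i)$ is odd (for instance, by reassigning one stub from $A$ to $B$). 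The proposition thus reduces to comparing $\EE[HB_p^{A,B}]$ between the ``split'' configuration model $\GG_{d\uparrow A}\sqcup\GG_{d\uparrow B}$ and the ``joint'' model $\GG_d$, and bounding the difference by $\psi(M)$ where $M=|E(\GG_d)|$.

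Next I will construct an interpolating family $(G^{(t)})_{t=0}^M$ of random graph distributions analogous to the one in \cite{Sal}. At stage $t$ I designate $t$ stub-pairs as \emph{restricted} (matched only within a single block, $A$-to-$A$ or $B$-to-$B$) and the remaining $M-t$ pairs as \emph{free} (matched uniformly among themselves). Then $G^{(0)}\sim\GG_d$ and $G^{(M)}\sim\GG_{d\uparrow A}\sqcup\GG_{d\uparrow B}$. Telescoping gives
\begin{equation*}
\EE\bigl[HB_p^{A,B}(G^{(M)})\bigr]-\EE\bigl[HB_p^{A,B}(G^{(0)})\bigr]
=\sum_{t=0}^{M-1}\Bigl(\EE\bigl[HB_p^{A,B}(G^{(t+1)})\bigr]-\EE\bigl[HB_p^{A,B}(G^{(t)})\bigr]\Bigr),
\end{equation*}
and each summand is analyzed by a single-swap coupling between $G^{(t)}$ and $G^{(t+1)}$, where one stub-pair flips from free to restricted. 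Since $HB_p^{A,B}$ inherits $O(1)$-Lipschitzness under edge changes from $MB$ (by averaging over $\Omega$), each summand is pointwise $O(1)$. I then combine a ``favorable mean-step'' inequality (the expected one-step change points in the right direction) with an Azuma--Hoeffding concentration bound on the cumulative fluctuation to obtain the error $\psi(M)=7\sqrt{M\log(1+M)}$.

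The hardest step will be the ``favorable mean-step'' inequality for $HB_p^{A,B}$. For additive, Lipschitz, concave graph parameters this is exactly Salez's concavity condition, but that framework does not directly cover the pseudo-parameter $HB_p^{A,B}$. I expect to need a hands-on combinatorial argument on $(A,B)$-bisections together with the averaging over random edge labelings $\Omega$; this is also the plausible place where a ferromagnetism-type condition such as $p\ge\f12$ would enter if required to secure the correct sign (consistent with the fact that the analogous statement fails for $p=0$, where $HB_p$ is the min-bisection and is superadditive).
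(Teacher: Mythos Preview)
Your outline matches the paper's route: read the left side as $\EE[HB_p^{A,B}]$ on the split model and interpolate to the joint model $\GG_d$ in the style of \cite{Sal}. The paper parametrizes the interpolation by $F_\fg(\alpha,\beta,\gamma)=\EE_{\fm\in\cM(\alpha,\beta,\gamma)}\fg(G[\fm])$, indexed by the numbers of $A$-edges, $B$-edges, and cross-edges, and then invokes Salez's result (Proposition~\ref{prop-graph-interpolation}/Corollary~\ref{cor-graph-interpolation}) as a black box once Lipschitz continuity and the local super-additivity
\[
\tfrac12\bigl(F_\fg(\alpha{+}1,\beta,\gamma)+F_\fg(\alpha,\beta{+}1,\gamma)\bigr)\le F_\fg(\alpha,\beta,\gamma{+}1)+\tfrac{2}{\delta}
\]
are verified for $\fg=HB_p^{A,B}$. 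The concentration you invoke is already internal to Salez's argument (it controls the random-walk drift of $(\alpha,\beta)$ along the path, combined with the Lipschitz bound, and produces the $\psi$ term); it is not a separate layer after the mean-step analysis. Your ``restricted/free stub'' interpolation is a slightly different parametrization but reduces to the same one-edge comparison.

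The genuine gap is that you have deferred the favorable mean-step inequality, and this \emph{is} the content of the proposition; everything else is Salez's machinery. The paper's argument is concrete: fix $\fm$ and a labeling $\Omega$, let $\cC^*$ be the set of optimal $(A,B)$-bisections of $G[\fm](\Omega)$, and partition the unmatched half-edges into classes $O_1,P_1,\dots,O_k,P_k$ where $x\sim y$ iff their vertices lie on the same side of \emph{every} cut in $\cC^*$, with $O_i,P_i$ opposing. A random $+1$-edge raises the optimum iff its endpoints land in different classes; a $-1$-edge lowers it iff they land in opposing classes. This gives explicit expressions for the expected change under a random $A$-, $B$-, or cross-edge in terms of the class sizes $o_i^A,p_i^A,o_i^B,p_i^B$. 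After isolating a $\le 2/\delta$ sampling-without-replacement correction, the residual inequality is \emph{linear in $p$}, so it suffices to check $p=1$ and $p=\tfrac12$; in both cases it collapses to a nonnegative sum of squares. That linear-in-$p$ reduction, together with the equivalence-class bookkeeping, is the missing idea and is exactly where $p\ge\tfrac12$ enters.
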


\subsection{Graph Interpolation}

Fix a partition $A,B$ of $[n]$ and a function $d: [n]\functo \NN$.
Say that an edge in a matching $\fm \in \cH_d$ is an \vocab{$A$-edge}
if both of its endpoints are in $A$; define a \vocab{$B$-edge} analogously.
Say an edge is a \vocab{cross-edge}
if it has an endpoint in each of $A$ and $B$.

For $\alpha,\beta,\gamma\in \NN$,
define $\cM(\alpha,\beta,\gamma)$ to be the set of matchings $\fm\in \cH_d$ with
$\alpha$ $A$-edges, $\beta$ $B$-edges, and $\gamma$ cross-edges;
say $(\alpha, \beta, \gamma)$ is \vocab{feasible}
if at least one such matching exists.

For a graph pseudo-parameter $\fg$ and feasible $(\alpha,\beta,\gamma)$, define
\begin{equation}
  F_{\fg}(\alpha,\beta,\gamma)
  =
  \EE_{\fm \in \cM(\alpha,\beta,\gamma)} \fg\lt(G[\fm]\rt).
\end{equation}
Let
\begin{equation}
  d(A) := \sum_{i\in A} d(i),
  \quad
  d(B) := \sum_{i\in B} d(i)
\end{equation}
be the total degree of the sets $A$ and $B$.
The following result is the proof of Theorem 3 of \cite{Sal};
while this was proved for graph parameters,
its proof extends directly to pseudo-parameters.

\begin{prop}\cite{Sal}\label{prop-graph-interpolation}
  Let $\fg$ be a graph pseudo-parameter obeying the following conditions.
  \begin{itemize}
    \item (Lipschitz Continuity)
    For any feasible $(\alpha,\beta,\gamma)$ and $(\alpha',\beta',\gamma')$:
    \begin{equation}\label{eq-def-lipschitz-continuity}
      |F_{\fg}(\alpha,\beta,\gamma)-F_{\fg}(\alpha',\beta',\gamma')|
      \le
      |\alpha-\alpha'|+|\beta-\beta'|+|\gamma-\gamma'|.
    \end{equation}
    \item (Local Super-Additivity)
    If $\delta \ge 2$ and $(\alpha,\beta,\gamma+\delta)$ is feasible, then
    \begin{equation}\label{eq-def-local-super-additivity}
      \f12
      \lt(
        F_{\fg}(\alpha+1,\beta,\gamma) + F_{\fg}(\alpha,\beta+1,\gamma)
      \rt)
      \le
      F_{\fg}(\alpha,\beta,\gamma+1) + \f{2}{\delta}.
    \end{equation}
  \end{itemize}
  Then, for any $\gamma \le \min(d(A),d(B))$,
  \begin{equation}\label{eq-graph-interpolation-result}
    F_{\fg}
    \lt(
      \lt\lfloor \f{d(A)}{2}\rt\rfloor,
      \lt\lfloor \f{d(B)}{2}\rt\rfloor,
      0
    \rt)
    \le
    F_{\fg}
    \lt(
      \lt\lfloor \f{d(A)-\gamma}{2}\rt\rfloor,
      \lt\lfloor \f{d(B)-\gamma}{2}\rt\rfloor,
      \gamma
    \rt) +
    \psi(\gamma).
  \end{equation}
\end{prop}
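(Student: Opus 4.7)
The plan is to invoke Salez's graph interpolation argument for the pseudo-parameter $\fg$, and verify that it transfers without modification; the key point is that neither the Lipschitz continuity (\ref{eq-def-lipschitz-continuity}) nor the local super-additivity (\ref{eq-def-local-super-additivity}) hypothesis references isomorphism invariance. Both are statements about the averaged quantity $F_{\fg}$, which only sees the triple $(\alpha,\beta,\gamma)$ summarizing the type distribution of a matching; nowhere does the argument need to compare $\fg$ on isomorphic but distinct graphs. So the argument of \cite{Sal} applies word-for-word, and it remains only to spell out the main steps.

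Concretely, the strategy is to construct an interpolating sequence of feasible triples $\theta_0,\theta_1,\ldots,\theta_\gamma$ with $\theta_t=(\lfloor(d(A)-t)/2\rfloor,\lfloor(d(B)-t)/2\rfloor,t)$, and telescope
\[
  F_{\fg}(\theta_0)-F_{\fg}(\theta_\gamma)
  = \sum_{t=0}^{\gamma-1}\bigl(F_{\fg}(\theta_t)-F_{\fg}(\theta_{t+1})\bigr).
\]
Each successive difference is controlled by a single application of local super-additivity, with $\delta$ taken as large as feasibility permits (at most $\gamma-t$), together with a Lipschitz correction absorbing the parity-dependent mismatch between $\lfloor(d(A)-t)/2\rfloor$ and $\lfloor(d(A)-t-1)/2\rfloor$ (and analogously for $B$).

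The main obstacle is the error accounting needed to match the specific bound $\psi(\gamma)=7\sqrt{\gamma\log(1+\gamma)}$: a step-by-step application that pays an $O(1)$ Lipschitz cost at each of the $\gamma$ steps would yield a useless $O(\gamma)$ bound, so the super-additivity must do most of the work. The remedy, following \cite{Sal}, is to group consecutive moves into coordinated pairs in which an $A$-edge and a $B$-edge are swapped for two cross-edges (so that the Lipschitz mismatch cancels), and then to aggregate these pairs into blocks whose super-additivity error $O(1/\delta)$ is balanced against the number of blocks via a standard integral estimate. Because none of these operations rely on isomorphism invariance, the conclusion holds for pseudo-parameters as it does for parameters.
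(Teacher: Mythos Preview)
Your proposal is correct and matches the paper's own treatment exactly: the paper does not give an independent proof but simply cites \cite{Sal} and remarks that the argument ``extends directly to pseudo-parameters,'' which is precisely your observation that neither hypothesis invokes isomorphism invariance. Your additional sketch of the interpolation mechanics (telescoping along the path $\theta_t$, pairing $A$- and $B$-edge removals to cancel parity costs, and balancing the $2/\delta$ errors against block counts) goes beyond what the paper records but is consistent with the Salez argument being invoked.
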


A complete matching of $\cH_d$ with $\gamma$ cross-edges must have
$\f{d(A)-\gamma}{2}$ $A$-edges and $\f{d(B)-\gamma}{2}$ $B$-edges, respectively.
Moreover, a uniformly random complete matching of $\cH_d$,
conditioned on having $\gamma$ cross-edges, is uniformly distributed
in $\cM\lt(\f{d(A)-\gamma}{2},\f{d(B)-\gamma}{2},\gamma\rt)$.
Thus, $F_{\fg}\lt(\f{d(A)-\gamma}{2},\f{d(B)-\gamma}{2},\gamma\rt)$
is the expected value of $\fg(G)$ for $G$ sampled from $\GG_d$,
conditioned on $G$ having $\gamma$ cross-edges.

Thus, by taking a weighted average of (\ref{eq-graph-interpolation-result})
over $\gamma$ of the correct parity, we get the following result.
\begin{cor}\label{cor-graph-interpolation}
  Let $\fg$ be a graph pseudo-parameter obeying
  (\ref{eq-def-lipschitz-continuity}) and (\ref{eq-def-local-super-additivity}).
  Then,
  \begin{equation}
    F_{\fg}
    \lt(
      \lt\lfloor \f{d(A)}{2}\rt\rfloor,
      \lt\lfloor \f{d(B)}{2}\rt\rfloor,
      0
    \rt)
    \le
    \EE\lt[\fg(\GG_d)\rt] + \psi\lt(|E(\GG_d)|\rt).
  \end{equation}
\end{cor}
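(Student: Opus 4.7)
The plan is to average the bound from Proposition~\ref{prop-graph-interpolation} over the random variable $\gamma = \gamma(\GG_d)$ equal to the number of cross-edges of a sample from $\GG_d$, weighted by the probability that this random variable takes each value.

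First I would note that $|E(\GG_d)| = \tfrac{1}{2}(d(A)+d(B))$ is deterministic, and that $d(A)$ and $d(B)$ necessarily have the same parity. For any complete matching of $\cH_d$ with $\gamma$ cross-edges, the numbers of $A$-edges and $B$-edges are $\tfrac{d(A)-\gamma}{2}$ and $\tfrac{d(B)-\gamma}{2}$ respectively, so the feasible values of $\gamma$ are precisely those integers in $[0,\min(d(A),d(B))]$ with $\gamma \equiv d(A) \pmod 2$. As the paragraph preceding the corollary records, the conditional law of $\GG_d$ given $\gamma$ cross-edges is uniform on $\cM(\tfrac{d(A)-\gamma}{2},\tfrac{d(B)-\gamma}{2},\gamma)$, so
\begin{equation*}
  \EE\lt[\fg(\GG_d) \,\big|\, \gamma(\GG_d)=\gamma\rt]
  = F_{\fg}\lt(\tfrac{d(A)-\gamma}{2},\tfrac{d(B)-\gamma}{2},\gamma\rt).
\end{equation*}

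Next I would multiply the bound \eqref{eq-graph-interpolation-result} by $\PP[\gamma(\GG_d)=\gamma]$ and sum over the feasible values of $\gamma$. The left-hand side of \eqref{eq-graph-interpolation-result} does not depend on $\gamma$, so it survives unchanged. On the right-hand side, the $F_{\fg}$ term sums via the tower property to $\EE[\fg(\GG_d)]$, and the error term becomes $\EE[\psi(\gamma(\GG_d))]$.

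Finally, since $\psi(x)=7\sqrt{x\log(1+x)}$ is monotonically increasing on $[0,\infty)$ and $\gamma(\GG_d) \le |E(\GG_d)|$ deterministically, we have the pointwise bound $\psi(\gamma(\GG_d)) \le \psi(|E(\GG_d)|)$, so $\EE[\psi(\gamma(\GG_d))] \le \psi(|E(\GG_d)|)$. Combining the two inequalities yields the corollary. There is no real obstacle here: the essential content lies in Proposition~\ref{prop-graph-interpolation}, and the only minor technical point is keeping the parity of $\gamma$ consistent, which is automatic once we recognize that the number of cross-edges always has the correct parity.
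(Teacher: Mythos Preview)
Your proposal is correct and follows exactly the paper's approach: average \eqref{eq-graph-interpolation-result} over the feasible values of $\gamma$, weighted by $\PP[\gamma(\GG_d)=\gamma]$, using that the conditional law given $\gamma$ is uniform on $\cM(\tfrac{d(A)-\gamma}{2},\tfrac{d(B)-\gamma}{2},\gamma)$. You make explicit two points the paper leaves implicit---the parity constraint ensuring the floors in \eqref{eq-graph-interpolation-result} are vacuous, and the monotonicity of $\psi$ used to pass from $\EE[\psi(\gamma)]$ to $\psi(|E(\GG_d)|)$---but these are just the obvious details behind the paper's one-line justification.
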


\section{Proof of Proposition~\ref{prop-subadd}}

Observe that every matching in $\cM(\alpha+1, \beta, \gamma)$ arises from adding
an $A$-edge to some matching in $\cM(\alpha, \beta, \gamma)$ in $\alpha+1$ ways.
Thus, adding a uniformly random $A$-edge to a matching sampled uniformly
from $\cM(\alpha, \beta, \gamma)$ generates the distribution $\cM(\alpha+1,\beta,\gamma)$.
Analogously, adding a uniformly random $B$ edge or cross-edge generates the
distributions $\cM(\alpha,\beta+1,\gamma)$ and $\cM(\alpha,\beta,\gamma+1)$, respectively.

We will first show that for any $p\ge \f12$,
$HB^{A,B}_{p}$ satisfies the hypotheses
of Corollary~\ref{cor-graph-interpolation}.

\begin{lem}\label{lem-hb-lipschitz}
  For any feasible $(\alpha, \beta, \gamma)$ and $(\alpha', \beta', \gamma')$,
  \begin{equation}
    |
      F_{HB^{A,B}_{p}}(\alpha,\beta,\gamma) -
      F_{HB^{A,B}_{p}}(\alpha',\beta',\gamma')
    |
    \le
    |\alpha-\alpha'|+|\beta-\beta'|+|\gamma-\gamma'|.
  \end{equation}
\end{lem}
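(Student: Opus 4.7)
The plan is to reduce the bound to a single-step estimate and then invoke the edge-addition coupling noted at the start of this section. By the triangle inequality, it suffices to prove
\[
|F_{HB^{A,B}_p}(\alpha+1,\beta,\gamma) - F_{HB^{A,B}_p}(\alpha,\beta,\gamma)| \le 1
\]
whenever both triples are feasible, together with the analogous single-step bounds for $\beta$ and $\gamma$. To deduce the general inequality I would connect $(\alpha,\beta,\gamma)$ and $(\alpha',\beta',\gamma')$ by a monotone lattice path through feasible triples: descend one coordinate at a time to the componentwise minimum $(\min(\alpha,\alpha'),\min(\beta,\beta'),\min(\gamma,\gamma'))$, then ascend to $(\alpha',\beta',\gamma')$. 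Every intermediate triple $(\alpha'',\beta'',\gamma'')$ is coordinatewise dominated by one of the two endpoints, so the feasibility constraints $2\alpha''+\gamma''\le d(A)$ and $2\beta''+\gamma''\le d(B)$ follow automatically, and the total number of steps equals $|\alpha-\alpha'|+|\beta-\beta'|+|\gamma-\gamma'|$.

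For the single-step estimate I would use the coupling explicit in the section's opening: a uniform $\fm\in\cM(\alpha,\beta,\gamma)$ together with a uniformly random $A$-edge $e$ on the unused $A$-half-edges yields $\fm'=\fm\cup\{e\}$ distributed uniformly on $\cM(\alpha+1,\beta,\gamma)$. Since $G[\fm']=G[\fm]+e$, this gives
\[
F_{HB^{A,B}_p}(\alpha+1,\beta,\gamma) - F_{HB^{A,B}_p}(\alpha,\beta,\gamma) = \EE\bigl[HB^{A,B}_p(G[\fm]+e) - HB^{A,B}_p(G[\fm])\bigr],
\]
so the lemma reduces to the deterministic claim that $|HB^{A,B}_p(G+e) - HB^{A,B}_p(G)|\le 1$ for every graph $G$ and every additional edge $e$. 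The corresponding reductions for $\beta$- and $\gamma$-increments are identical with a uniform $B$-edge or cross-edge in place of $e$.

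Finally, the edge-Lipschitz property of $HB^{A,B}_p$ follows pointwise in the labeling $\Omega$. Fix any labeling of $G$ and any extension assigning $\omega_e\in\{\pm 1\}$ to $e$. For each $(A,B)$-bisection of $V(G)$, the new edge contributes $\omega_e$ to its objective if $e$ is cut and $0$ otherwise; in particular the objective changes by at most $1$ in absolute value. Taking the maximum over $(A,B)$-bisections then shifts $MB^{A,B}$ by at most $1$, and averaging over $\Omega$ preserves this bound. The only mildly delicate step in the whole argument is the lattice-path construction, which is what necessitates the detour through the componentwise minimum rather than a naive coordinate-by-coordinate traversal that could temporarily violate feasibility.
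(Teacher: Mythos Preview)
Your proof is correct and follows essentially the same approach as the paper: reduce to a single-coordinate increment, use the edge-addition coupling to identify the difference as an expected one-edge perturbation, and bound that perturbation pointwise in the labeling $\Omega$ by observing that $MB^{A,B}$ shifts by at most $1$ when a single edge is added. Your explicit lattice-path argument through the componentwise minimum is a welcome addition, as the paper simply asserts that it suffices to handle triples differing in one coordinate without verifying that a connecting path of feasible triples exists.
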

\begin{proof}
  It suffices to prove this for the case when
  $(\alpha, \beta, \gamma)$ and $(\alpha', \beta', \gamma')$
  differ by 1 in exactly one coordinate.

  Let $\fm$ be uniformly sampled from $\cM(\alpha,\beta,\gamma)$,
  and $\fm'$ be obtained from $\fm$ by adding a uniformly random $A$-edge.
  Conditioned on any labeling $\Omega$ of the edges of $G[\fm]$,
  the value of $MB^{A,B}(G[\fm](\Omega))$ changes by most 1
  when we add a uniformly random $A$-edge to $\fm$.
  Thus, $|HB^{A,B}_{p}(G[\fm])-HB^{A,B}_{p}(G[\fm'])| \le 1$.
  But, $\fm'$ is uniformly distributed in $\cM(\alpha+1,\beta,\gamma)$,
  so the result follows.

  The argument for $B$-edges and cross-edges is analogous.
\end{proof}

\begin{lem}\label{lem-hb-subadditive}
  If $(\alpha,\beta,\gamma+\delta)$ is feasible, then
  \begin{equation}
    \f12
    \lt(
      F_{HB^{A,B}_{p}}(\alpha+1,\beta,\gamma) +
      F_{HB^{A,B}_{p}}(\alpha,\beta+1,\gamma)
    \rt)
    \le
    F_{HB^{A,B}_{p}}(\alpha,\beta,\gamma+1) + \f{2}{\delta}.
  \end{equation}
\end{lem}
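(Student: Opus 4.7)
The plan is to prove this local super-additivity via a switching argument in the spirit of \cite{BGT} and \cite{Sal}, adapted to the non-additive pseudo-parameter $HB^{A,B}_p$ by exploiting the $(A,B)$-partition structure. Consider a partial matching $\fm^*$ with four dangling half-edges $a_1, a_2 \in A$ and $b_1, b_2 \in B$, which can be completed in exactly three ways:
\begin{itemize}
\item Way I pairs $(a_1, a_2)$ and $(b_1, b_2)$, adding one $A$-edge and one $B$-edge;
\item Ways II and III pair $(a_1, b_1)(a_2, b_2)$ and $(a_1, b_2)(a_2, b_1)$ respectively, each adding two cross-edges.
\end{itemize}
A double-counting argument shows that uniformly sampling a pair $(\fm, \{e_1, e_2\})$ with $\fm \in \cM(\alpha, \beta, \gamma+\delta)$ and $\{e_1, e_2\}$ two of its $\gamma+\delta$ cross-edges yields, via Way I, a uniform sample from $\cM(\alpha+1, \beta+1, \gamma+\delta-2)$, while Ways II and III each produce a uniform sample from $\cM(\alpha, \beta, \gamma+\delta)$.

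The heart of the argument is a pointwise switching inequality on $MB^{A,B}(G(\Omega))$: for a given $(A,B)$-bisection and the random $\pm 1$ labels on the two new edges, the Way I completion should be dominated on average by Ways II and III. I would establish this via a case analysis on how $a_1, a_2$ and $b_1, b_2$ distribute between the two halves of $A$ and $B$ respectively. The hypothesis $p \ge \tfrac12$ is essential: because an $(A,B)$-bisection bisects $A$ and $B$ separately, within-class edges are ``rigidly'' oriented by the fixed bisection of their endpoint class, while cross-edges have two independent coordinates and thus greater optimization flexibility; the label bias $p \ge \tfrac12$ converts this flexibility into an expected-cut advantage for Ways II and III over Way I.

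Aggregating the pointwise inequality over $\fm^*$ then yields the first-step bound $F_{HB^{A,B}_p}(\alpha+1, \beta+1, \gamma+\delta-2) \le F_{HB^{A,B}_p}(\alpha, \beta, \gamma+\delta)$. To translate this into the target statement involving $(\alpha+1, \beta, \gamma)$, $(\alpha, \beta+1, \gamma)$, and $(\alpha, \beta, \gamma+1)$, I would combine with Lemma~\ref{lem-hb-lipschitz} and average across the $\delta$ cross-edge slots furnished by the feasibility hypothesis; the error term $\tfrac{2}{\delta}$ arises as the natural smoothing constant from this averaging, analogous to the corresponding step in \cite{Sal}. The main obstacle will be the pointwise switching inequality: the case analysis is delicate, and $p \ge \tfrac12$ must enter precisely in order for Ways II and III to dominate Way I on average. (For $p < \tfrac12$ the alignment reverses, consistent with the expected failure of a scaling limit in the antiferromagnetic regime.)
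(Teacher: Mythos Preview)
Your proposed pointwise switching inequality is false, so the argument does not go through. Take $p=1$ and a partial matching $\fm^*$ whose labeled graph $G[\fm^*](\Omega)$ has a \emph{unique} optimal $(A,B)$-bisection $C^*=(V_1,V_2)$, with every other $(A,B)$-bisection at least $3$ below $C^*$ in cut value. Suppose the four dangling half-edges satisfy $a_1,b_1\in V_1$ and $a_2,b_2\in V_2$. Since two added edges can shift any cut value by at most $2$, $C^*$ remains uniquely optimal in every completion, and one computes directly
\[
MB^{A,B}(\text{Way I})=M+2,\qquad MB^{A,B}(\text{Way II})=M,\qquad MB^{A,B}(\text{Way III})=M+2,
\]
so Way I strictly exceeds the average of Ways II and III. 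The same configuration defeats the ``per fixed bisection'' version you state. No amount of case analysis on how $a_1,a_2,b_1,b_2$ sit in the two halves will repair this: once the four half-edges are fixed, the inequality simply fails in some configurations.

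What the paper does instead is compare adding a \emph{single} edge whose endpoints are \emph{uniformly random} among the unmatched half-edges of the appropriate type. For a fixed $\fm\in\cM(\alpha,\beta,\gamma)$ and fixed labeling $\Omega$, one partitions the unmatched half-edges into equivalence classes according to which side of each optimal $(A,B)$-bisection they lie on; the expected change in $MB^{A,B}$ under each of the three edge types is then an explicit quadratic expression in the class-size fractions. The difference one must bound collapses to a sum of squares at $p=1$ and at $p=\tfrac12$, hence is $\le 0$ for all $p\in[\tfrac12,1]$ by linearity in $p$. The $\tfrac{2}{\delta}$ term does not come from ``averaging across $\delta$ cross-edge slots'' as you suggest; it is the correction between sampling two $A$-half-edges with versus without replacement, bounded using that at least $\delta$ $A$-half-edges (and $\delta$ $B$-half-edges) remain unmatched by the feasibility of $(\alpha,\beta,\gamma+\delta)$. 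The averaging over random endpoints is exactly the ingredient that converts the failed pointwise comparison into a valid quadratic-form inequality, and it cannot be replaced by your fixed-endpoint switching.
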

\begin{proof}
  We will prove a stronger claim:
  for any $\fm \in \cM(\alpha,\beta,\gamma)$,
  and any labeling $\Omega$ of $G[\fm]$,
  \begin{align}
    \begin{split}
      & \f12
      \EE\lt[
        MB^{A,B} (G[\fm](\Omega)+e_a)\rt] +
        \f12 \EE\lt[MB^{A,B} (G[\fm](\Omega)+e_b)
      \rt] \\
      & \le
      \EE\lt[
        MB^{A,B} (G[\fm](\Omega)+e_c)
      \rt] +
      \f{2}{\delta},
    \end{split}
  \end{align}
  where $e_a, e_b, e_c$ are
  uniformly random $A$-, $B$-, and cross-edges not in $\fm$,
  labeled $+1$ with probability $p$ and $-1$ with probability $1-p$.
  From this, the desired result follows from
  averaging over all $\fm \in \cM(\alpha,\beta,\gamma)$
  and all labelings $\Omega$ of $G[\fm]$.

  Let $\cC^*$ be the collection of
  maximal $(A,B)$-bisections of $G[\fm](\Omega)$.
  We introduce the equivalence relation $\sim$ on the
  half-edges in $\cH_d$ not paired by $\fm$,
  where $x\sim y$ if the vertices corresponding to $x,y$
  are on the same side of all bisections in $\cC^*$.
  Moreover, we say two equivalence classes are \textit{opposing}
  if their members appear on the opposite side of all bisections of $\cC^*$.

  Let the equivalence classes of $\sim$ be $O_1,P_1,O_2,P_2,\dots,O_k,P_k$,
  where $O_i$ and $P_i$ are opposing.

  If we add a $(+1)$-labeled edge $e^+$ to $G[\fm](\Omega)$,
  its maxcut increases if and only if $e^+$ crosses some cut in $\cC^*$;
  equivalently, the endpoints of $e^+$ must be in different equivalence classes.

  If we add a $(-1)$-labeled edge $e^-$ to $G[\fm](\Omega)$,
  its maxcut decreases if and only if $e^-$ crosses all cuts in $\cC^*$;
  equivalently, the endpoints of $e^-$ must be in opposite equivalence classes.

  Define $o_i^A = |O_i \cap A|$, and define $o_i^B, p_i^A, p_i^B$ analogously.
  Define
  \begin{equation}
    a = \sum_{i=1}^k (o_i^A+p_i^A), \quad\quad b = \sum_{i=1}^k (o_i^B+p_i^B).
  \end{equation}
  It follows that:

  \begin{align}\label{eq-subadd-a}
    \begin{split}
      & \EE\lt[MB^{A,B} (G[\fm](\Omega)+e_a)\rt] - MB^{A,B} (G[\fm](\Omega)) \\
      & =
      p \lt[
        1 - \sum_{i=1}^k \lt(
          \f{o_i^A(o_i^A-1)}{a(a-1)} + \f{p_i^A(p_i^A-1)}{a(a-1)}
        \rt)
      \rt] +
      (1-p) \lt[
        -\sum_{i=1}^k \f{2o_i^Ap_i^A}{a^2}
      \rt] \\
      & = p \lt[
        1 - \sum_{i=1}^k \lt(
          \f{(o_i^A)^2}{a^2} +
          \f{(p_i^A)^2}{a^2} -
          \f{o_i^A (a - o_i^A)}{a^2 (a-1)} -
          \f{p_i^A (a - p_i^A)}{a^2 (a-1)}
        \rt)
      \rt] \\
      & + (1-p) \lt[
        -\sum_{i=1}^k \f{2o_i^Ap_i^A}{a^2}
      \rt].
    \end{split}
  \end{align}
  Analogously,
  \begin{align}\label{eq-subadd-b}
    \begin{split}
      & \EE\lt[MB^{A,B} (G[\fm](\Omega)+e_b)\rt] - MB^{A,B} (G[\fm](\Omega)) \\
      & =
      p \lt[
        1 - \sum_{i=1}^k \lt[
          \f{(o_i^B)^2}{b^2} +
          \f{(p_i^B)^2}{b^2} -
          \f{o_i^B (b - o_i^B)}{b^2 (b-1)} -
          \f{p_i^B (b - p_i^B)}{b^2 (b-1)}
        \rt]
      \rt] \\
      & + (1-p) \lt[
        -\sum_{i=1}^k \f{2o_i^Bp_i^B}{b^2}
      \rt].
    \end{split}
  \end{align}
  and
  \begin{align}\label{eq-subadd-cross}
    \begin{split}
      & \EE\lt[MB^{A,B} (G[\fm](\Omega)+e_c)\rt] - MB^{A,B} (G[\fm](\Omega)) \\
      & =
      p \lt[
        1 - \sum_{i=1}^k \lt[
          \f{o_i^Ao_i^B}{ab} +
          \f{p_i^Ap_i^B}{ab}
        \rt]
      \rt] +
      (1-p) \lt[
        -\sum_{i=1}^k \lt[
          \f{o_i^Ap_i^B}{ab}+\f{p_i^Ao_i^B}{ab}
        \rt]
      \rt].
    \end{split}
  \end{align}

  Equations (\ref{eq-subadd-a}), (\ref{eq-subadd-b}), (\ref{eq-subadd-cross})
  imply:
  \begin{align}
    \begin{split}
      & \f12 \EE\lt[MB^{A,B} (G[\fm](\Omega)+e_a)\rt] +
      \f12 \EE\lt[MB^{A,B} (G[\fm](\Omega)+e_b)\rt] \\
      & -
      \EE\lt[MB^{A,B} (G[\fm](\Omega)+e_c)\rt] \\
      &=
      -\f12p \sum_{i=1}^k \lt[
        \f{(o_i^A)^2}{a^2} +
        \f{(p_i^A)^2}{a^2} +
        \f{(o_i^B)^2}{b^2} +
        \f{(p_i^B)^2}{b^2} -
        \f{2o_i^Ao_i^B}{ab} -
        \f{2p_i^Ap_i^B}{ab}
      \rt] \\
      &
      - \f12(1-p) \sum_{i=1}^k \lt[
        \f{2o_i^Ap_i^A}{a^2} +
        \f{2o_i^Bp_i^B}{b^2} -
        \f{2o_i^Ap_i^B}{ab} -
        \f{2p_i^Ao_i^B}{ab}
      \rt]  \\
      &
      + \f12p \sum_{i=1}^k \lt[
        \f{o_i^A (a - o_i^A)}{a^2 (a-1)} +
        \f{p_i^A (a - p_i^A)}{a^2 (a-1)} +
        \f{o_i^B (b - o_i^B)}{b^2 (b-1)} +
        \f{p_i^B (b - p_i^B)}{b^2 (b-1)}
      \rt].
    \end{split}
  \end{align}
  The first main observation is that
  \begin{equation}
    \sum_{i=1}^k \lt[
      \f{o_i^A (a - o_i^A)}{a^2 (a-1)} +
      \f{p_i^A (a - p_i^A)}{a^2 (a-1)}
    \rt]
    \le
    \sum_{i=1}^k \lt[
      \f{o_i^A}{a (a-1)} +
      \f{p_i^A}{a (a-1)}
    \rt]
    =
    \f{1}{a-1}
    \le
    \f{2}{\delta},
  \end{equation}
  and analogously
  \begin{equation}
    \sum_{i=1}^k \lt[
      \f{o_i^B (b - o_i^B)}{b^2 (b-1)} +
      \f{p_i^B (b - p_i^B)}{b^2 (b-1)}
    \rt] \le \f{2}{\delta}.
  \end{equation}
  So,
  \begin{align}
    \begin{split}
      &\f12p \sum_{i=1}^k \lt[
        \f{o_i^A (a - o_i^A)}{a^2 (a-1)} +
        \f{p_i^A (a - p_i^A)}{a^2 (a-1)} +
        \f{o_i^B (b - o_i^B)}{b^2 (b-1)} +
        \f{p_i^B (b - p_i^B)}{b^2 (b-1)}
      \rt]
      \le
      \f{2p}{\delta} \\
      &\le
      \f{2}{\delta}.
    \end{split}
  \end{align}
  So, it remains to show
  \begin{align}\label{eq-desired-ineq}
    \begin{split}
      &
      -\f12p \sum_{i=1}^k \lt[
        \f{(o_i^A)^2}{a^2} +
        \f{(p_i^A)^2}{a^2} +
        \f{(o_i^B)^2}{b^2} +
        \f{(p_i^B)^2}{b^2} -
        \f{2o_i^Ao_i^B}{ab} -
        \f{2p_i^Ap_i^B}{ab}
      \rt] \\
      &
      - \f12(1-p) \sum_{i=1}^k \lt[
        \f{2o_i^Ap_i^A}{a^2} +
        \f{2o_i^Bp_i^B}{b^2} -
        \f{2o_i^Ap_i^B}{ab} -
        \f{2p_i^Ao_i^B}{ab}
      \rt]
      \le 0. \\
    \end{split}
  \end{align}
  The second main observation is that the
  left-hand side is a linear function of $p$,
  so verifying (\ref{eq-desired-ineq}) at $p=1$ and $p=\f12$ is sufficient.
  At $p=1$, (\ref{eq-desired-ineq}) follows from:
  \begin{equation}
    -\f12 \sum_{i=1}^k \lt[
      \lt(\f{o_i^A}{a}-\f{o_i^B}{b}\rt)^2 +
      \lt(\f{p_i^A}{a}-\f{p_i^B}{b}\rt)^2
    \rt] \le 0.
  \end{equation}
  At $p=\f12$, (\ref{eq-desired-ineq}) follows from:
  \begin{equation}
    -\f14 \sum_{i=1}^k \lt[
      \f{o_i^A}{a} +
      \f{p_i^A}{a} -
      \f{o_i^B}{b} -
      \f{p_i^B}{b}
    \rt]^2 \le 0.
  \end{equation}
\end{proof}

We are now ready to prove Proposition~\ref{prop-subadd}.
\begin{proof}[Proof of Proposition~\ref{prop-subadd}]
  By Propositions~\ref{lem-hb-lipschitz} and \ref{lem-hb-subadditive},
  $HB^{A,B}_{p}$ satisfies the hypotheses of
  Corollary~\ref{cor-graph-interpolation}.
  Thus,
  \begin{equation}
    F_{HB^{A,B}_{p}}
    \lt(
      \lt\lfloor \f{d(A)}{2}\rt\rfloor, \lt\lfloor \f{d(B)}{2}\rt\rfloor, 0
    \rt)
    \le
    \EE\lt[HB^{A,B}_{p}(\GG_d)\rt] +
    \psi\lt(|E(\GG_d)|\rt).
  \end{equation}

  The graphs arising from
  $\cM\lt(
    \lt\lfloor \f{d(A)}{2}\rt\rfloor, \lt\lfloor \f{d(B)}{2}\rt\rfloor, 0
  \rt)$
  have no cross-edges.
  So, an optimal $(A,B)$-bisection of these graphs is the sum of
  an optimal bisection of $A$ and an optimal bisection of $B$.
  Thus,
  \begin{equation}
    F_{HB^{A,B}_{p}}
    \lt(
      \lt\lfloor \f{d(A)}{2}\rt\rfloor, \lt\lfloor \f{d(B)}{2}\rt\rfloor, 0
    \rt)
    =
    \EE\lt[HB_p(\GG_{d\uparrow A})\rt] +
    \EE\lt[HB_p(\GG_{d\uparrow B})\rt],
  \end{equation}
  as desired.
\end{proof}

\section{Proof of Theorem~\ref{thm-hybrid}}

On the space of probability measures on $\NN$ with finite mean,
define the \textit{Wasserstein distance}
\begin{equation}
  \cW(\mu, \mu')
  =
  \sum_{i=1}^\infty \lt| \sum_{k=i}^\infty \lt(\mu(k) - \mu'(k\rt)\rt|.
\end{equation}

We will use the following result from \cite{Sal}.
\begin{prop}\label{prop-mu-lipschitz}
  Let $\ff$ be a $1$-Lipschitz graph parameter.
  For any $d,d':[n]\functo \NN$,
  \begin{equation}
    \lt|
      \f{1}{n}\EE\lt[\ff(\GG_{d})\rt] -
      \f{1}{n}\EE\lt[\ff(\GG_{d'})\rt]
    \rt|
    \le
    2 \cW\lt(
      \f1n \sum_{i=1}^n \delta_{d(i)},
      \f1n \sum_{i=1}^n \delta_{d'(i)}
    \rt).
  \end{equation}
\end{prop}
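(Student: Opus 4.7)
The strategy is to couple $\GG_d$ with $\GG_{d'}$ along an interpolating sequence of degree functions, using the $1$-Lipschitz hypothesis to bound the change of $\ff$ at each step. Since $\ff$ is isomorphism-invariant and the law of $\GG_d$ depends only on the multiset of degrees, I would first relabel $[n]$ independently for each of $d$ and $d'$ so that both sequences are sorted in nondecreasing order. Under this alignment one has the standard identity
\begin{equation*}
n\cdot\cW\lt(\f{1}{n}\sum_{i=1}^n\delta_{d(i)},\;\f{1}{n}\sum_{i=1}^n\delta_{d'(i)}\rt)
=\sum_{i=1}^n|d(i)-d'(i)|,
\end{equation*}
since the formula in the statement is the $L^1$ distance of survival functions, which coincides with the cost of the monotone coupling. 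It therefore suffices to prove $\lt|\EE\lt[\ff(\GG_d)\rt]-\EE\lt[\ff(\GG_{d'})\rt]\rt|\le 2\sum_{i=1}^n|d(i)-d'(i)|$.

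Next I would build a path $d=d^{(0)},d^{(1)},\dots,d^{(L)}=d'$ through degree functions of even total, where consecutive terms differ by one of two elementary moves. A \emph{swap move} $d^{(t+1)}=d^{(t)}+e_i-e_j$ (sum-preserving) is coupled by sampling $\fm\sim\cH_{d^{(t)}}$ uniformly and relabeling a uniformly chosen half-edge of vertex $j$ as a half-edge of vertex $i$. Because only a label changes, this bijects perfect matchings of $\cH_{d^{(t)}}$ with those of $\cH_{d^{(t+1)}}$, so the output is uniform on $\cH_{d^{(t+1)}}$; the two induced graphs differ in at most one edge (either an edge formerly incident to $j$ becomes incident to $i$, or a self-loop is shifted), hence $|\ff(G^{(t)})-\ff(G^{(t+1)})|\le 2$ by the $1$-Lipschitz hypothesis. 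A \emph{pair move} $d^{(t+1)}=d^{(t)}+e_i+e_j$, used to reconcile any difference between $\sum d$ and $\sum d'$, is coupled by sampling $\fm'\sim\cH_{d^{(t+1)}}$ uniformly, removing the two new half-edges together with their partners, and re-pairing the two orphaned endpoints; another bijection argument shows the output is uniform on $\cH_{d^{(t)}}$, and the induced graphs differ in at most three edges, so the $\ff$-change is at most $3$.

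Chaining these moves along a path with $k=(|\sum d-\sum d'|)/2$ pair moves placed at coordinates where $d<d'$ (each reducing the residual $L^1$ distance by $2$) and $s$ subsequent swaps (each reducing it by another $2$) yields a total $\ff$-change of at most $3k+2s$; since $k+s=\tfrac12\sum_i|d(i)-d'(i)|$ and $k\le\tfrac12\sum_i|d(i)-d'(i)|$, this is bounded by $2\sum_i|d(i)-d'(i)|$, and dividing by $n$ gives the stated inequality. The main obstacle I anticipate is the distributional correctness of the two couplings, namely verifying that each elementary move carries a uniform random matching on $\cH_{d^{(t)}}$ to a uniform random matching on $\cH_{d^{(t+1)}}$. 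For the swap move this is immediate from the observation that the pairing of abstract half-edges is unchanged and only a vertex label is overwritten; for the pair move one must identify an explicit correspondence between perfect matchings before and after the addition of two half-edges, tracking the two new half-edges and their partners with enough care to preserve both uniformity and the Lipschitz bound on $\ff$.
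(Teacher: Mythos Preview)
The paper does not prove this proposition; it is quoted verbatim as a result from \cite{Sal} and invoked without argument. So there is no ``paper's own proof'' to compare against.

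That said, your approach is sound and is essentially the argument Salez gives. The identification of $n\cW$ with the sorted $L^1$ distance is correct under the paper's definition of $\cW$ (it is the $L^1$ distance of survival functions, and the monotone coupling realizes it). Your swap-move coupling is a genuine bijection of perfect matchings, so uniformity is immediate; for the pair move, the map you describe is $(m{+}1)$-to-$1$ from matchings of $\cH_{d^{(t+1)}}$ onto matchings of $\cH_{d^{(t)}}$ (either the two new half-edges are paired together, or they displace a single old edge in one of $m$ ways), so uniformity holds there too. Your arithmetic $3k+2s=k+2(k+s)\le \tfrac12\sum|d-d'|+\sum|d-d'|\le 2\sum|d-d'|$ then closes the bound. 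The only points worth tightening in a formal write-up are (i) making explicit that all intermediate degree sequences have even sum, which follows since pair moves change the sum by $2$ and swap moves preserve it, and (ii) noting that the $2k=|\sum d-\sum d'|$ increments can always be placed at deficit coordinates without overshooting because $2k=P-N\le P$.
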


Throughout this proof,
let $\GG_{\mu, n}^{IID}$ denote the random graph on $n$ vertices,
where each vertex's degree is sampled i.i.d. from the distribution $\mu$.

\begin{proof}[Proof of Theorem~\ref{thm-hybrid}]
  Proposition~\ref{prop-subadd} immediately implies the subadditivity of $HB_p$:
  for all partitions $A,B$ of $[n]$, and all $d:[n]\functo \NN$,
  \begin{equation}
    \EE\lt[HB_p(\GG_{d\uparrow A})\rt] +
    \EE\lt[HB_p(\GG_{d\uparrow B})\rt]
    \le
    \EE\lt[HB_p(\GG_d)\rt] +
    \psi\lt(|E(\GG_d)|\rt).
  \end{equation}
  Fix a distribution $\mu$ on $\NN$ with finite mean.
  By averaging the above inequality over $d$
  whose values are sampled i.i.d. from $\mu$, we have
  \begin{equation}
    \EE\lt[
      HB_p(\GG_{\mu, |A|}^{IID})\rt] +
      \EE\lt[HB_p(\GG_{\mu, |B|}^{IID})
    \rt]
    \le
    \EE\lt[HB_p(\GG_{\mu, n}^{IID})\rt] +
    \psi\lt(\f12 \mean{\mu} n\rt),
  \end{equation}
  where we have used Jensen's Inequality on the concavity of $\psi$.

  Note that $\psi(\f12 \mean{\mu} n) = o\lt(n^{2/3}\rt)$.
  By Fekete's Subadditivity Lemma, this implies that the scaling limit
  \begin{equation}
    \lim_{n\approach \infty}\f1n \EE\lt[HB_p(\GG_{\mu, n}^{IID})\rt]
  \end{equation}
  exists.
  Let this limit equal $\Psi(\mu)$.

  Since $HB_p$ is Lipschitz, Proposition~\ref{prop-mu-lipschitz} applies.
  By setting $d=d_n$, sampling each $d'(1),\dots,d'(n)$ uniformly from $\mu$,
  and taking the limit as $n\approach\infty$, we get
  \begin{equation}
    \lt|
      \f{1}{n}\EE\lt[\ff(\GG_{d_n})\rt] -
      \f{1}{n}\EE\lt[\ff(\GG^{IID}_{\mu, n})\rt]
    \rt|
    \approach 0.
  \end{equation}
  Therefore,
  \begin{equation}
  \lim_{n\approach \infty } \f{1}{n}\EE\lt[\ff(\GG_{d_n})\rt] = \Psi(\mu)
  \end{equation}
  as well.
  Moreover, as $HB_p$ is Lipschitz,
  Azuma-Hoeffding's inequality implies the concentration inequality
  \begin{equation}
  \PP\lt[
    \lt|HB_p (\GG_d)- \EE\lt[HB_p(\GG_d)\rt]\rt| \ge \eps
  \rt]
  \le
  \exp\lt(
    -\f{\eps^2}{4 \sum_{i=1}^n d(i)}
  \rt).
  \end{equation}
  Since $d$ converges in distribution to $\mu$,
  the Borel-Cantelli Lemma implies the almost-sure convergence
  \begin{equation}
  \lt|
    \f{1}{n}HB_p(\GG_{d_n}) - \f{1}{n}\EE\lt[HB_p(\GG_{d_n})\rt]
  \rt| \approach 0.
  \end{equation}
  The result follows.
\end{proof}


\begin{thebibliography}{1}

\bibitem{BGT}
M.~Bayati, D.~Gamarnik, and P.~Tetali.
\newblock Combinatorial appproach to the interpolation method and scaling
  limits in sparse random graphs.
\newblock {\em The Annals of Probability}, 41(6):4080--4115, 2013.

\bibitem{FrLe}
S.~Franz and M.~Leone.
\newblock Replica bounds for optimization problems and diluted spin systems.
\newblock {\em Journal of Statistical Physics}, 111(3-4):535--564, 2003.

\bibitem{FLT}
S.~Franz, M.~Leone, and F.~Toninelli.
\newblock Replica bounds for diluted non-poissonian spin systems.
\newblock {\em Journal of Physics A: Mathematical and General},
  36(43):10967--10985, 2003.

\bibitem{Gam}
D.~Gamarnik.
\newblock Right-convergence of sparse random graphs.
\newblock {\em Probability Theory and Related Fields}, 160(1-2):253--278, 2014.

\bibitem{GuTo}
F.~Guerra and F.~Toninelli.
\newblock The thermodynamic limit in mean field spin glass models.
\newblock {\em Communications in Mathematical Physics}, 230(1):71--79, 2002.

\bibitem{Aim}
American~Institute of~Mathematics.
\newblock Phase transitions in randomized computational problems, available at
  http://aimpl.org/phaserandom.

\bibitem{PaTa}
D.~Panchenko and M.~Talagrand.
\newblock Bounds for diluted mean-fields spin glass models.
\newblock {\em Probability Theory and Related Fields}, 130:319--336, 2004.

\bibitem{Sal}
J.~Salez.
\newblock The interpolation method for random graphs with prescribed degrees.
\newblock {\em Combinatorics, Probability, and Computing}, 25:436--447, 2015.

\bibitem{ZdBo}
L.~Zdeborov\'a and S.~Boetther.
\newblock Conjecture on the maximum cut and bisetion width in random regular
  graphs.
\newblock {\em Journal of Statistical Mechanics}, page P02020, 2010.

\end{thebibliography}
\end{document}